\documentclass{amsart}
\usepackage{amsrefs}
\usepackage{hyperref}
\usepackage{amssymb}
\usepackage{graphicx,color}
\usepackage{mathtools}

\theoremstyle{plain}
\newtheorem{theorem}{Theorem}

\newtheorem{fact}{Fact}

\theoremstyle{definition}
\newtheorem{definition}{Definition}

\newtheorem{example}{Example}

\theoremstyle{remark}
\newtheorem{remark}{Remark}

\newcommand{\StTwo}{St_{(2)}}
\newcommand{\StOne}{St_{(1)}}

\newcommand{\ind}{\operatorname{ind}}
\newcommand{\sgn}{\operatorname{sgn}}

\title[Stokes structures for Arnold-type invariants]
{Underlying Stokes and de Rham structures for Arnold-type invariants}
\author{Noboru Ito}
\address{Department of Mathematics, Faculty of Engineering, Shinshu University, Wakasato 4-17-1 Nagano, Nagano 380-8553, Japan}
\email{nito@shinshu-u.ac.jp}
\author{Hiroki Mizuno}
\address{Department of Science and Technology, Graduate School of Medicine, Science and Technology, Shinshu University, Asahi 3-1-1, Matsumoto, Nagano 390-8626, Japan}
\email{22hs602j@shinshu-u.ac.jp}
\date{May 13, 2026}

\begin{document}
\begin{abstract}
We introduce a framework on dual complexes for studying Arnold-type
invariants of immersed curves and immersed surfaces via local finite-difference
structures associated with Alexander numberings.
For generic immersed plane curves and generic immersed surfaces, we define
locally normalized maps \(d^k\phi\) on dual skeleta and show that suitable
evaluations recover the Arnold-type invariants \(St_{(1)}\) and \(St_{(2)}\).
In particular, we establish normalized discrete Stokes-type compatibilities
between adjacent dual skeleta and derive corresponding
Shumakovitch-type identities for curves and surfaces.
The normalization coefficients are determined by finite-difference factorial
structures together with multiplicities of local configurations.
We further interpret the iterated-integral-type structures appearing in
Shumakovitch-type identities through finite-difference structures and
highest-degree local Stokes compatibilities on dual complexes.
We also reinterpret the slice formula for \(St_{(2)}\) and \(St_{(1)}\)
as a compatibility relation between slicing and local operations on the
dual complex.
These results provide a unified framework in which global Arnold-type
invariants arise as distribution-type evaluations of local data on dual
complexes.
The framework further clarifies the distinction between untwisted local
closures and globally twisted structures such as the original Arnold invariant \(St\),
and suggests the existence of higher-degree local operations associated
with the same dual-complex structure.
\end{abstract}
\maketitle

\section{Introduction}\label{sec:Intro}

The purpose of this paper is to develop a discrete de Rham framework
for Arnold-type invariants and associated distribution-type structures
of immersed plane curves and immersed surfaces.
Within this framework, classical identities are naturally reinterpreted
through local finite-difference structures on dual complexes,
while the surface theory appears as a higher-dimensional extension
of the curve case.

The present paper does not pursue an abstract general theory alone.
One reason is that explicit Shumakovitch-type identities for surface immersions
have not previously been formulated.
Another reason is that the parallel development of the curve and surface cases
already exhibits a sufficiently rich correspondence within the discrete de Rham framework.
Moreover, particular emphasis is placed on the compatibility with the slice formula
and motion-picture viewpoint developed in \cite{ItoMizuno2026sliceF}.

We introduce what we call a \emph{discrete de Rham framework},
in which invariants are generated from local index data
via Stokes-compatible local operations on dual complexes.
Let $\phi$ denote the Alexander numbering on regions.
Viewing $\phi$ as a function on dual \(0\)-cells,
we show that fundamental invariants of plane curves and surface immersions
arise from explicitly defined higher-order local operations applied to \(\phi\).
In particular, the invariants \(St_{(1)}\) and \(St_{(2)}\)
are realized, after suitable normalizations,
as evaluations of \(d^2\phi\) and \(d^3\phi\), respectively.
Our first main result establishes normalized discrete
Stokes-type compatibilities on dual complexes
(Theorem~\ref{thm:DS}),
showing that evaluations on adjacent dual skeleta
are related through local finite-difference structures.

This viewpoint naturally explains classical identities,
for instance the Shumakovitch identity \cite{Shumakovich1995}
\[
St(C)
=
\sum_{p\in\Delta_0(C)} \varepsilon(p)\,\ind(p)
=
\frac12 \sum_{e\in\Delta_1(C)} s(e)\,\ind(e)^2
=
\frac13 \sum_{R\in\Delta_2(C)} g(R)\,\ind(R)^3,
\]
in the normalization where the base point is chosen
on an edge adjacent to the unbounded region.
Moreover, our second main result gives corresponding
surface identities for \(St_{(2)}\)
(Theorem~\ref{thm:ShumakovitchF}),
where evaluations of \(d^2\phi\) and \(d^3\phi\)
on dual skeleta recover triple-point contributions
through normalized local finite-difference relations.
These identities are understood as consequences
of discrete Stokes-type compatibilities.
In this sense, the framework developed here unifies
curve and surface identities through a common evaluation structure
on dual complexes.

A key feature of the present framework is that
it does not require a full discrete de Rham complex structure.
Instead, the essential structure is governed by
highest-degree local Stokes compatibilities on dual cells.
For curves, the compatibility between \(d^2\phi\) and \(d^1\phi\)
admits a complete local closure around dual \(2\)-cells.
For surfaces, the highest-degree compatibility between
\(d^3\phi\) and \(d^2\phi\)
admits a symmetric cubical realization,
whereas lower-degree compatibilities do not form a full cubical closure.
This distinction explains why the untwisted invariants
\(St_{(1)}\) and \(St_{(2)}\)
fit naturally into the local finite-difference framework.

Another key geometric ingredient of our approach is the slice interpretation
of surface immersions.
We reinterpret the slice formula of \cite{ItoMizuno2026sliceF}
\[
dSt_{(2)} = dSt_{(1)} + \sgn(\underline{\mathbb{R}^2})
\]
as an expression of discrete derivatives along a cooriented height function.
In this formulation, variations of invariants are encoded
by local differences of functions on the dual complex,
and slicing corresponds to a discrete integration along the height direction.
In contrast, the original Arnold invariant \(St(C)\)
requires an additional global twisting structure:
\[
St(C)=\sum_{d\in D(C)} s(d)\,\ind(d),
\]
where the sign system \(s(d)\) depends globally on the curve
and cannot be recovered solely from local index data.
Thus \(St\) should not be regarded as a failure of the present framework,
but rather as a globally twisted extension of the untwisted local hierarchy.
From this viewpoint,
\(St\) occupies the position of a twisted version of \(St_{(1)}\),
associated with a nontrivial global sign system.
The present work therefore clarifies the distinction between
untwisted local closures governed purely by finite differences
and dual incidences,
and globally twisted structures requiring additional sign transport data.
In this perspective,
\(St_{(2)}\) may also be regarded as a higher-dimensional analogue
of \(St_{(1)}\).

\noindent
\textbf{Structure of the paper.}
Section~\ref{sec:prelim} reviews generic immersions,
Alexander numberings, and induced cell decompositions.
Section~\ref{sec:dual} introduces dual complexes,
the discrete de Rham framework,
and the locally normalized finite-difference structures
for curves and surfaces.
Section~\ref{sec:ShId} develops explicit sign realizations
and Shumakovitch-type identities through local finite-difference computations,
and reformulates these identities conceptually within the discrete de Rham framework.
Section~\ref{sec:slice-drum} interprets the slice formula
within this framework and explains the compatibility between slicing
and the discrete local structures.
Finally, Section~\ref{sec:outlook} discusses higher-degree structures
and possible extensions of the present framework.

\section{Preliminaries}\label{sec:prelim}

\subsection{Generic immersions and local moves}

\begin{definition}[generic immersion]
A smooth immersion of a closed oriented surface into $\mathbb{R}^3$
is said to be \emph{generic} if its image exhibits only transverse
double intersections and isolated triple intersections.
\end{definition}

We identify an immersion with its image when no confusion arises.

\begin{definition}[local singularity jumps]
Two generic immersions are said to be \emph{generically regularly homotopic}
if they are related by diffeomorphisms and finitely many local moves
of the following four types:
\[
\textup{E},\quad \textup{H},\quad \textup{T},\quad \textup{Q}.
\]
\end{definition}

\begin{itemize}
\item[(E)] elliptic tangency of two sheets,
\item[(H)] hyperbolic tangency of two sheets,
\item[(T)] tangency of a double line to a sheet,
\item[(Q)] quadruple point (four sheets intersecting).
\end{itemize}

These are precisely the codimension-one singular events
in the sense of \cite{Nowik2004}, based on the classification of
Goryunov \cite{Goryunov1997}.

\medskip

Fix an orientation of $\Sigma$.
Then the immersion inherits a natural coorientation in $\mathbb{R}^3$,
which allows a refinement of these moves into types
\[
\textup{E}^j,\quad \textup{H}^j,\quad \textup{T}^j,\quad \textup{Q}^j,
\]
according to the number of positively cooriented sheets
(see also \cites{Goryunov1997,Nowik2004}).

\subsection{Cell decomposition}

Let $S:\Sigma \looparrowright \mathbb{R}^3$ be a generic immersion.
Then $\mathbb{R}^3$ admits a natural cell decomposition:

\begin{itemize}
\item $3$-cells (regions): components of $\mathbb{R}^3 \setminus S(\Sigma)$,
\item $2$-cells (sheets),
\item $1$-cells (double lines),
\item $0$-cells (triple points).
\end{itemize}

We denote by $\Delta_k(S)$ the set of $k$-cells.

\subsection{Alexander numbering}

\begin{definition}[Alexander numbering]
Using the coorientation, we assign values to regions as follows:
fix the unbounded region with value $-\tfrac{3}{2}$,
and increase or decrease the value when crossing sheets
according to the coorientation.

This defines a function
\[
\ind : \{\text{regions}\} \to \mathbb{Z}+\tfrac{1}{2},
\]
called the \emph{Alexander numbering}.
\end{definition}

\begin{remark}
The Alexander numbering is globally well-defined on all regions,
once a base region is fixed.
\end{remark}

\begin{definition}[Alexander numbering for plane curves]
Let $C\subset \mathbb{R}^2$ be an oriented generic immersed plane curve.
We choose the coorientation of $C$ to be the left normal direction
with respect to the orientation of the curve.

We assign the Alexander numbering to the regions of
$\mathbb{R}^2\setminus C$ by declaring the unbounded region to have value $0$,
and by increasing the value by $1$ whenever one crosses the curve
in the coorientation direction.

This defines a function
\[
\ind:\{\text{regions of } \mathbb{R}^2\setminus C\}\to \mathbb{Z},
\]
called the Alexander numbering of the plane curve.
\end{definition}

\begin{remark}
Equivalently, after smoothing the curve, the Alexander numbering of a region
may be interpreted as the signed number of times the region is enclosed.
For example, it can be computed by drawing a generic ray from the region
to the unbounded region and counting crossings with signs
``right-to-left minus left-to-right'' with respect to the oriented curve.
\end{remark}

\subsection{Indices on cells}

\begin{definition}[indices]\label{def:indix}

\medskip
\noindent
{\bf Surface case.}
Let $T(S)$ denote the set of triple points.
\begin{itemize}
\item 
For a triple point $t\in T(S)$, or equivalently for a $0$-cell
$\sigma_0\in \Delta_0(S)$, define
\[
\ind(\sigma_0)
:= \text{the average of the Alexander numberings of the adjacent regions}.
\]
\item Similarly, indices $\ind(\sigma_1)$ and $\ind(\sigma_2)$
for double lines $\sigma_1$ and sheets $\sigma_2$
are defined by averaging the indices of adjacent regions.
\end{itemize}

\paragraph{\bf Curve case.}
Let $D(C)$ denote the set of double points of a generic immersed plane curve.
\begin{itemize}
\item For a double point $p\in D(C)$, or equivalently for a
$0$-cell $\sigma_0\in\Delta_0(C)$, define
\[
\ind(\sigma_0)
:= \text{the average of the Alexander numberings of the adjacent regions}.
\]
\item Similarly, the index $\ind(\sigma_1)$ of an arc $\sigma_1$
is defined by averaging the Alexander numberings of the two adjacent regions.
\end{itemize}

\end{definition}

\begin{remark}
These averages are well-defined since adjacent regions differ by integer values.
\end{remark}

\section{Dual complex and the discrete de Rham framework}\label{sec:dual}

\subsection{Evaluation and discrete Stokes principle}\label{sec:principle}

To extract numerical invariants from locally defined data on the dual complex,
we consider the natural evaluation of functions on dual cells.

For a function $\psi$ defined on dual $k$-cells and a dual $k$-cell $X_k$,
we write
\[
\langle \psi, X_k \rangle
\]
for its evaluation on $X_k$.

We denote by $\Delta_k^\vee(S)$ and $\Delta_k^\vee(C)$
the sets of dual $k$-cells for surfaces and curves, respectively.

\begin{definition}
Let $\phi_S$ be the function assigning to each region
of a surface immersion its Alexander numbering.
We regard $\phi_S$ as a function on $\Delta_0^\vee(S)$:
\[
\phi_S \in \mathcal{K}^0(S)
:=
\{f:\Delta_0^\vee(S)\to\mathbb{Q}\}.
\]

Similarly, let $\phi_C$ denote the Alexander numbering
for a plane curve.
We regard $\phi_C$ as a function on $\Delta_0^\vee(C)$:
\[
\phi_C \in \mathcal{K}^0(C)
:=
\{f:\Delta_0^\vee(C)\to\mathbb{Q}\}.
\]
\end{definition}

Let
\[
d^k:\mathcal K^0(S)\to\mathcal K^k(S)
\]
for $k=1,2,3$, and
\[
d^k:\mathcal K^0(C)\to\mathcal K^k(C)
\]
for $k=1,2$,
denote maps defined by explicit local formulas.

The notation $d^k\phi$ refers to the locally defined value
of the map $d^k$ applied to $\phi$,
not to an iterated differential operator.

To describe compatibility relations, we also use boundary operators
\[
\partial:\Delta_k^\vee(S)\to\Delta_{k-1}^\vee(S),
\qquad
\partial:\Delta_k^\vee(C)\to\Delta_{k-1}^\vee(C),
\]
which assign to each dual $k$-cell the formal sum
(with orientation signs included)
of its incident dual $(k-1)$-cells.

\begin{theorem}[Discrete Stokes compatibility]\label{thm:DS}

The explicitly defined maps
\[
d^k:\mathcal K^0(S)\to\mathcal K^k(S),
\qquad
d^k:\mathcal K^0(C)\to\mathcal K^k(C),
\]
together with the evaluation pairings
\[
\langle -, - \rangle,
\]
satisfy the following normalized local compatibilities.

For surfaces, one has
\begin{equation}\label{eq:DSsurfaceJ}
\frac1{24}
\langle d^3\phi_S,X_3\rangle
=
\frac1{18}
\langle d^2\phi_S,\partial X_3\rangle
\end{equation}
on symmetric cubical configurations surrounding triple points.

For curves, one has
\begin{equation}\label{eq:DScurveJ}
\frac1{3!}
\langle d^2\phi_C,X_2\rangle
=
\frac1{2!}
\langle d^1\phi_C,\partial X_2\rangle
\end{equation}
around double points.

\end{theorem}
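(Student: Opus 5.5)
The plan is to reduce both identities to explicit polynomial computations in a single local coordinate chart around the relevant singular point. Near a double point $p$ of $C$, the four adjacent regions carry Alexander numbers $i-1,\ i,\ i,\ i+1$, where $i=\ind(p)$. Near a triple point $t$ of $S$, the eight adjacent regions carry $\ind(t)+\tfrac12(\epsilon_1+\epsilon_2+\epsilon_3)$ with $\epsilon_j\in\{\pm1\}$. Hence the dual $2$-cell $X_2$, respectively the dual $3$-cell $X_3$, is combinatorially a square, respectively a cube. Its vertices are labelled by $\{0,1\}^k$, and $\phi$ restricts to an affine function of the cube coordinates. I will use the local formulas for $d^k$ as (signed, coorientation-oriented) mixed finite differences of the appropriate powers of $\phi$, namely $\phi^{k+1}$ in the curve case and $\phi^{4}$ in the surface case. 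These are the formulas the paper fixes so that the evaluations recover $St_{(1)}$ and $St_{(2)}$. With them, each pairing $\langle d^k\phi, X_k\rangle$ becomes a $k$-fold mixed difference of a one-variable polynomial evaluated along the index direction.

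First I will do the curve case. The left side of \eqref{eq:DScurveJ} is the mixed second difference of $\phi^3$ over the square, which is $(i+1)^3-2i^3+(i-1)^3=6i$. Thus $\tfrac1{3!}\langle d^2\phi_C,X_2\rangle=\ind(p)$. For the right side, $\partial X_2$ consists of the four dual edges crossing the four half-arcs at $p$, with the orientation signs induced from the square. On each edge $d^1\phi_C$ is a first difference of $\phi^2$, i.e.\ $2\ind(e)$, where $\ind(e)\in\{i\pm\tfrac12\}$. Summing with the boundary signs, the two edges with $\ind(e)=i+\tfrac12$ and the two with $i-\tfrac12$ combine to $2i$. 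This gives $\tfrac1{2!}\langle d^1\phi_C,\partial X_2\rangle=\ind(p)$ after the normalization is checked, and the identity follows. I will carry out the check against the sign conventions: the coorientation fixes which of the two incident regions is the head of each dual edge.

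The surface case follows the same pattern. The third mixed difference of $\phi^4$ over the cube is $\tfrac{d^3}{dx^3}x^4$ evaluated at the centre (odd terms cancel by symmetry), i.e.\ $24\,\ind(t)$. Hence $\tfrac1{24}\langle d^3\phi_S,X_3\rangle=\ind(t)$. For the right side, $\partial X_3$ is the six square faces, dual to the six half-double-lines at $t$. On each face $d^2\phi_S$ is a mixed second difference of $\phi^4$ centred at $\ind(\sigma_1)=\ind(t)\pm\tfrac12$, namely $12\ind(\sigma_1)^2+2$. Opposite faces come with opposite boundary signs. Their contributions therefore combine as $12\big[(i+\tfrac12)^2-(i-\tfrac12)^2\big]=12i$ per pair, giving $36i$ in total over three pairs, if the faces are counted with the multiplicity of the symmetric configuration. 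Dividing by the normalization should yield $\ind(t)$.

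The main obstacle is exactly this normalization bookkeeping: reconciling the constant $18$ with the raw face sum. It depends on how the paper's ``symmetric cubical configuration'' weights faces, and on whether $d^2\phi_S$ includes a factorial or multiplicity factor. Each face is shared by two adjacent cubes and there are three coordinate directions, so a multiplicity factor of $2$ or $3$ may enter. I would first fix that convention from the definition of $d^2\phi_S$ used for the $St_{(2)}$ evaluation. Then I would verify the constants on the simplest example, a single triple point of Boy-type local model, before stating the general equality.
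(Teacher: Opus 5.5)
Your two left-hand sides agree with the paper: $\langle d^2\phi_C,X_2\rangle=(i+1)^3-2i^3+(i-1)^3=6i$ and $\langle d^3\phi_S,X_3\rangle=(x+\tfrac32)^4-3(x+\tfrac12)^4+3(x-\tfrac12)^4-(x-\tfrac32)^4=24x$. The gap is on the right-hand sides. There you replace the paper's $d^1\phi_C$ and $d^2\phi_S$ by mixed finite differences of a power of $\phi$ at the vertices. With that choice neither identity holds, so this is not a normalization issue you can settle later.

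In the curve case your $d^1\phi_C$ is the first difference of $\phi^2$. Its value on each of the four boundary edges is $2\ind(e)\in\{2i+1,2i-1\}$, each value occurring twice. Every $\pm1$-combination of these lies in $\{0,\pm4,\pm(4i+2),\pm(4i-2),\pm 8i\}$, and none of these equals $2i$ identically in $i$. With the coherent boundary orientation of the square you get exactly $0$, because the coboundary $\delta(\phi^2)$ is exact; this is why the paper stresses that its $d^k$ are not iterated coboundaries. In the paper, $d^1\phi_C$ on a dual $1$-cell is $\pm\ind(\sigma_1)^2$, the signed square of the index of the arc it crosses. Only the two incoming germs are counted, giving $(i+\tfrac12)^2-(i-\tfrac12)^2=2i$. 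Counting all four would give $4i$, which is the multiplicity $m=2$ remark.

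In the surface case your face value $12\ind(\sigma_1)^2+2$ is a mixed difference of $\phi^4$. The paper's $d^2\phi_S$ is instead a signed sum of \emph{cubes} of the indices of the adjacent dual $1$-cells. Your pairing arithmetic is also off: $12\bigl[(i+\tfrac12)^2-(i-\tfrac12)^2\bigr]=24i$, not $12i$. So the three pairs give $72i$, and $\tfrac1{18}\cdot 72i=4i$. With the alternating cellular orientation of $\partial X_3$ you would get $\tfrac1{18}\cdot 24i=\tfrac43 i$ instead; in neither case $i$.

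Structurally, write $\Delta_j$ for the difference in the $j$-th cube direction. Then $\Delta_1\Delta_2\Delta_3(\phi^4)=\Delta_3\bigl(\Delta_1\Delta_2\phi^4\bigr)$, so a single pair of opposite faces already reproduces $24x$. Your operator is therefore compatible only with the constants $1/24$ or $1/72$, never $1/18$.

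The constant $18=3\cdot 3!$ comes from the paper's definition. There $\langle d^2\phi_S,\partial X_3\rangle$ is the grouped signed sum of cubes of the indices of the twelve sheet pieces, i.e.\ the dual $1$-cells forming the edges of the cube. These indices are $x+1,x,x-1$ with multiplicities $3,6,3$ and signs $+,-,+$. This gives $3\bigl[(x+1)^3-2x^3+(x-1)^3\bigr]=3\,D^2(x^3)=18x$, one second difference for each double-line direction. With the paper's definitions, both sides of each identity are one-line polynomial checks, and no Boy-type model computation is needed.
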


\begin{remark}
The above relations describe highest-degree local Stokes compatibilities
for the local configurations surrounding double and triple points.
They are not derived from iterated cochain operators,
and no full cochain-complex structure is assumed.

In the curve case, the compatibility between \(d^2\phi_C\)
and \(d^1\phi_C\) closes completely around dual \(2\)-cells.
In the surface case, the compatibility between
\(d^3\phi_S\) and \(d^2\phi_S\)
admits a symmetric cubical realization at the highest degree,
whereas lower-degree compatibilities do not form a complete cubical closure.
\end{remark}

Theorem~\ref{thm:DS} is proved in
Sections~\ref{subsec:CellDecomp}--\ref{subsec:DScurve}.

\subsection{Cell decomposition induced by an immersion}\label{subsec:CellDecomp}

Let $S:\Sigma \looparrowright \mathbb{R}^3$ be a generic immersion.
Then $\mathbb{R}^3$ admits a cell decomposition:
\begin{itemize}
\item $3$-cells (regions),
\item $2$-cells (sheets),
\item $1$-cells (double lines),
\item $0$-cells (triple points).
\end{itemize}

We denote this decomposition by $\Delta(S)$.

\subsection{Dual complex for surfaces}

Let $\Delta^\vee(S)$ be the dual complex of $\Delta(S)$.
Each dual $k$-cell corresponds to a $(3-k)$-cell of $\Delta(S)$.

Thus functions on $\Delta_k^\vee(S)$ encode data on $(3-k)$-cells.

Definition~\ref{def:indix} induces a propagation of values
from regions to higher-dimensional cells.

More precisely, for a $0$-cell $\sigma_0\in\Delta_0(S)$
and a $k$-cell $\sigma_k\in\Delta_k(S)$ adjacent to it,
we assign
\[
\ind(\sigma_0)\longmapsto \ind(\sigma_k).
\]

\subsection{Discrete differential structure for surfaces}\label{sec:Discrete}

The definitions of the maps \(d^k\) are given as follows. 
The signs \(\sgn(\cdot)\) appearing below are determined by local
orientations and coorientations induced from the Alexander numbering
(see Section~\ref{sec:ShStTwo} for explicit conventions).

\begin{itemize}

\item \(d^1\phi_S\):
\[
d^1 \phi_S (\sigma^{\vee}_1)
=
\sum_{\sigma^{\vee}_2 \sim \sigma^{\vee}_1}
\sgn(\sigma^{\vee}_2)\,\ind(\sigma^{\vee}_2)^2 ,
\]
where the sum runs over $\sigma^{\vee}_2$
adjacent to $\sigma^{\vee}_1$.

\item \(d^2\phi_S\):
\[
d^2 \phi_S (\sigma^{\vee}_2)
=
\sum_{\sigma^{\vee}_1 \sim \sigma^{\vee}_2}
\sgn(\sigma^{\vee}_1)\,\ind(\sigma^{\vee}_1)^3 ,
\]
where the sum runs over $\sigma^{\vee}_1$
adjacent to $\sigma^{\vee}_2$.

\item \(d^3\phi_S\):
\[
d^3 \phi_S (\sigma^{\vee}_3)
=
\sum_{\sigma^{\vee}_0 \sim \sigma^{\vee}_3}
\sgn(\sigma^{\vee}_0)\,\ind(\sigma^{\vee}_0)^4 ,
\]
where the sum runs over $\sigma^{\vee}_0$
adjacent to $\sigma^{\vee}_3$.

\end{itemize}

\begin{proof}[Proof of Theorem~\ref{thm:DS} for surfaces]

\begin{figure}[htbp]
\centering
\includegraphics[width=0.38\linewidth]{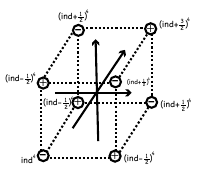}
\caption{Local cubical configuration for \(d^3\phi_S\) around a triple point.
The cube represents the dual \(3\)-cell surrounding the triple point.
Its vertices correspond to the adjacent dual \(0\)-cells, and the local
orientations determine the signed fourth-order finite-difference pattern.}
\label{fig:Dloop}
\end{figure}

\medskip
\noindent
{\bf Evaluation of the left-hand side of \eqref{eq:DSsurfaceJ}.}

Figure~\ref{fig:Dloop} illustrates the local contribution associated with
\(d^3\phi_S\).

The vertices of the cube indicate the adjacent dual \(0\)-cells,
and the alternating signs represent the local finite-difference pattern.

For the dual \(3\)-cell \(X_3\) surrounding a triple point,
the signed sum over the adjacent dual \(0\)-cells becomes
\[
(x+\tfrac32)^4
-3(x+\tfrac12)^4
+3(x-\tfrac12)^4
-(x-\tfrac32)^4
=
24x.
\]

After normalization by \(1/24\),
the local contribution reduces to
\[
x=\ind(t).
\]

\begin{figure}[htbp]
\centering
\includegraphics[width=0.42\linewidth]{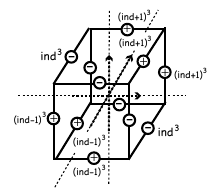}
\caption{Grouped local dual \(1\)-cell contributions associated with \(d^2\phi_S\).
The indicated segments represent local dual \(1\)-cell contributions
corresponding to sheet pieces adjacent to the boundary dual
\(2\)-cells of the cubical dual \(3\)-cell.}
\label{fig:DloopO}
\end{figure}

\medskip
\noindent
{\bf Evaluation of the right-hand side of \eqref{eq:DSsurfaceJ}.}

Figure~\ref{fig:DloopO} illustrates the grouped local contributions
associated with \(d^2\phi_S\).

The contributions are collected according to the three double-line
directions of the triple-point configuration.

With the local signs determined by the orientation and coorientation,
one obtains
\[
3(x+1)^3
-6x^3
+3(x-1)^3
=
18x.
\]

After normalization by \(1/18\),
the local contribution reduces to
\[
x=\ind(t).
\]

In Figure~\ref{fig:DloopO}, the dashed segments indicate the
double-line directions corresponding to dual \(1\)-cell contributions,
while the solid segments represent portions of the boundary
of the dual \(3\)-cell.

Therefore both sides of \eqref{eq:DSsurfaceJ}
reduce to the same local contribution \(\ind(t)\).
\end{proof}

\subsection{Dual complex for curves}

We next prepare notation for proving \eqref{eq:DScurveJ},
and then give the proof of
Theorem~\ref{thm:DS} for curves.

For a generic immersed plane curve
$C \subset \mathbb{R}^2$,
we proceed similarly.

In this case,
we use the Alexander numbering on the regions
of $\mathbb{R}^2\setminus C$
as defined in Section~\ref{sec:prelim},
and construct the dual complex accordingly.

The induced decomposition consists of:
\begin{itemize}
\item regions (2-cells),
\item arcs (1-cells),
\item double points (0-cells).
\end{itemize}

Let $\Delta^\vee(C)$ denote the dual complex.

Each dual $k$-cell corresponds to a $(2-k)$-cell
of the original cell decomposition.

In particular,
a dual $2$-cell corresponding to a double point
is a small disk neighborhood in the dual complex,
whose boundary consists of four dual $1$-cells.

We define
\[
\phi_C \in \mathcal K^0(C)
\]
to be the Alexander numbering on regions.

\subsection{Discrete differential structure for curves}\label{subsec:DScurve}

We now give the explicit local definitions
of the maps \(d^k\) for plane curves.

Let $C \subset \mathbb{R}^2$
be a generic immersed curve,
and let
\[
\phi_C \in \mathcal K^0(C)
\]
be the Alexander numbering on regions.

The definitions of the maps $d^k$ are given as follows.  
The signs \(\sgn(\cdot)\) appearing below are determined by local
orientations and coorientations induced from the Alexander numbering
(see Section~\ref{sec:ShStOne} for explicit conventions).

\begin{itemize}

\item \(d^1\phi_C\):
\[
d^1 \phi_C (\sigma^{\vee}_1)
=
\sum_{\sigma_1 \sim \sigma^{\vee}_1}
\sgn(\sigma_1)\,\ind(\sigma_1)^2 ,
\]
where the sum runs over the relevant arc segments
incident to the dual \(1\)-cell \(\sigma^\vee_1\).

\item \(d^2\phi_C\):
\[
d^2 \phi_C (\sigma^{\vee}_2)
=
\sum_{\sigma^{\vee}_0 \sim \sigma^{\vee}_2}
\sgn(\sigma^{\vee}_0)\,\ind(\sigma^{\vee}_0)^3 ,
\]
where the sum runs over the dual \(0\)-cells
corresponding to the regions adjacent to the double point
represented by \(\sigma^\vee_2\).

\end{itemize}

\begin{proof}[Proof of Theorem~\ref{thm:DS} for curves]

\begin{figure}[htbp]
\centering
\includegraphics[width=0.45\linewidth]{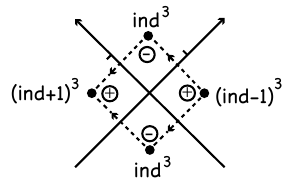}
\caption{Local Stokes configuration for \(d^2\phi_C\) around a double point.
The dotted loop represents the boundary \(\partial\sigma^\vee_2\)
of the dual \(2\)-cell,
and the local orientations determine the signed finite-difference pattern.}
\label{fig:loop}
\end{figure}

\medskip
\noindent
{\bf Evaluation of the left-hand side of \eqref{eq:DScurveJ}.}

Figure~\ref{fig:loop}
illustrates the local Stokes-type relation around a double point.

The dual \(2\)-cell \(\sigma^\vee_2\)
is a small disk surrounding the double point,
and its boundary \(\partial\sigma^\vee_2\)
is the oriented loop formed by four dual \(1\)-cells.

Evaluating \(d^2\phi_C\)
on this local configuration yields
\[
(i+1)^3-i^3-i^3+(i-1)^3
=
6i,
\]
which is precisely the second finite difference
\[
D^2(x^3)=3!\,x.
\]

After normalization by \(1/3!\),
the local contribution reduces to
\[
i=\ind(p).
\]

\begin{figure}[htbp]
\centering
\includegraphics[width=0.35\linewidth]{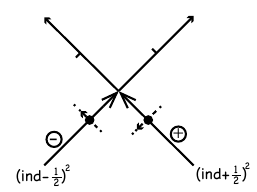}
\caption{Local directional contribution associated with \(d^1\phi_C\).
Only the two incoming local edge contributions are counted
in the evaluation of \(d^1\phi_C\).}
\label{fig:injective}
\end{figure}

\medskip
\noindent
{\bf Evaluation of the right-hand side of \eqref{eq:DScurveJ}.}

Figure~\ref{fig:injective}
illustrates the local directional contribution associated with \(d^1\phi_C\).
Readers should note that only incoming edge contributions
are counted here in order to avoid double counting.

With the local signs determined by the orientation and coorientation,
one obtains
\[
(i+\tfrac12)^2-(i-\tfrac12)^2
=
2i,
\]
which corresponds to the first finite difference
\[
D(x^2)=2x.
\]

After normalization by \(1/2!\),
the local contribution reduces to
\[
i=\ind(p).
\]

Therefore both sides of \eqref{eq:DScurveJ}
reduce to the same local contribution \(\ind(p)\).
\end{proof}

  \section{Local sign realization and explicit Shumakovitch identities}\label{sec:ShId}

\subsection{Shumakovitch-type identities}\label{subsec:Shumakovitch}

A key point is that the present framework is governed by
highest-degree local closure phenomena on dual skeleta.
More precisely, the \(0\)-cell contribution is recovered from higher-cell data
through normalized local finite-difference identities
compatible with discrete Stokes-type relations.

For curves, the highest-degree local compatibility
closes completely around dual \(2\)-cells.
For surfaces, the highest-degree structure admits a symmetric cubical realization,
although lower-degree compatibilities do not form a full cubical closure.

\subsubsection{The original Shumakovitch identity for plane curves}

We record a concrete sign/gleam convention for the original Shumakovitch identity
for an oriented generic immersed plane curve.

\paragraph{Double points (0-cells).}
Let $C\subset \mathbb{R}^2$ be an oriented generic immersion,
and let $p$ be a double point.
Assume that the base point is chosen on an edge adjacent
to the unbounded region.

We define the local sign $\varepsilon(p)\in\{\pm1\}$
to be the sign of the intersection form,
with the following convention:
\begin{itemize}
\item $\varepsilon(p)=-1$ if the \emph{lower-left} branch passes first,
\item $\varepsilon(p)=+1$ if the \emph{lower-right} branch passes first.
\end{itemize}

\paragraph{Incident edges (1-cells).}
Let $(e_L,e_R)$ be the two incident local branches at $p$.
We assign edge signs $(s_L,s_R)$ by
\[
(s_L,s_R)=(-,+)\quad\text{if }\varepsilon(p)=-1,
\qquad
(s_L,s_R)=(+,-)\quad\text{if }\varepsilon(p)=+1.
\]

\paragraph{Adjacent regions and gleam weights (2-cells).}
Let $R_1,R_2,R_3,R_4$ be the four local regions adjacent to $p$.
Pair $R_2$ and $R_4$ among $R_1,\dots,R_4$ so that they have the same Alexander numbering.

Then we assign local gleam weights $g_p(R)\in\{\pm\frac12\}$ by
\begin{align*}
g_p(R)&=
\begin{cases}
-\frac12 & \text{for $R_1, R_3$,}\\[2pt]
+\frac12 & \text{for $R_2, R_4$,}
\end{cases}
\quad\text{if }\varepsilon(p)=-1,
\\
&\text{If $\varepsilon(p)=+1$, we reverse all the above local gleam signs.}
\end{align*}

The local gleam weights extend to a global function on regions by
\[
g(R) = \sum_{p \in \partial R} g_p(R),
\]
where the sum runs over all double points adjacent to $R$.

Turaev \cite{Turaev1994Book} introduced the notion of the gleam $g(R)$,
which is globally well-defined.
Using this notion, Shumakovitch \cite{Shumakovich1995} deduced his identity
(Fact~\ref{fact:Sh}).

With these conventions, the following equalities hold.

\begin{fact}[Original Shumakovitch identity]\label{fact:Sh}
\begin{equation}\label{eq:Shumakovitch-curve-original}
St(C)
=
\sum_{p\in\Delta_0(C)} \varepsilon(p)\,\ind(p)
=
\frac12 \sum_{e\in\Delta_1(C)} s(e)\,\ind(e)^2
=
\frac13 \sum_{R\in\Delta_2(C)} g(R)\,\ind(R)^3,
\end{equation}
where $\ind(\cdot)$ denotes the Alexander numbering
and $s(e)$ and $g(R)$ are determined by the above edge-sign and gleam conventions.
\end{fact}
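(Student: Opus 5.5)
Since $g(R)=\sum_{p\in\partial R} g_p(R)$ and each edge sign $s(e)$ is assembled from the local branch signs $(s_L,s_R)$ at the double points bounding $e$, all three sums in \eqref{eq:Shumakovitch-curve-original} regroup into sums over double points. The plan is therefore to reduce Fact~\ref{fact:Sh} to a pointwise identity at each double point $p$. For a fixed $p$, let $i=\ind(p)$. The four adjacent regions then carry Alexander numbers $i+1,\,i,\,i,\,i-1$: the two regions with the common value $i$ are $R_2,R_4$, and $R_1,R_3$ carry $i\pm1$. The two incident branches, as seen from the relevant side, carry the half-integer indices $i\pm\tfrac12$.

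The local computation is exactly the one in the proof of Theorem~\ref{thm:DS} for curves, with the signs $\varepsilon(p)$, $(s_L,s_R)$ and $g_p$ in place of the abstract $\sgn$ pattern.

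For the regions, with $\varepsilon(p)=-1$ the gleam weights $(-\tfrac12,+\tfrac12,-\tfrac12,+\tfrac12)$ give
\[
\tfrac13\cdot\tfrac12\bigl(-(i+1)^3+2i^3-(i-1)^3\bigr)=-i=\varepsilon(p)\,i .
\]
This is the second difference $D^2(x^3)=6x$ scaled by $1/3!$, matching \eqref{eq:DScurveJ}. The case $\varepsilon(p)=+1$ reverses every sign.

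For the edges, the incoming-branch convention of Figure~\ref{fig:injective} attributes to $p$ one half-edge of index $i+\tfrac12$ and one of index $i-\tfrac12$, with signs $(s_L,s_R)$. This gives
\[
\tfrac12\bigl(s_L(i\pm\tfrac12)^2+s_R(i\mp\tfrac12)^2\bigr)=\varepsilon(p)\,i ,
\]
which is again $D(x^2)=2x$ normalized by $1/2!$.

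Summing these local identities over $p\in\Delta_0(C)$ gives the second and third equalities, provided every edge contribution is counted exactly once. Each edge joins two double points (or is a loop), so its $s(e)\ind(e)^2$ must be split consistently between its endpoints. Likewise each region must collect exactly the weights $g_p(R)$ over its corners, and this holds by the definition of $g$.

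The first equality, $St(C)=\sum_p\varepsilon(p)\ind(p)$, I would not reprove. It is the Viro/Shumakovitch formula for $St$ with the base point on an edge adjacent to the unbounded region, and I would cite it, checking only that the paper's ``lower-left/lower-right branch passes first'' convention agrees with the sign of the intersection form used there. As a sanity check, I would compute all three sums for the figure-eight curve and for a curve with one kink inside a circle.

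The main obstacle is the edge bookkeeping. The per-vertex identity uses the two \emph{incoming} half-edges at $p$, with signs depending on $\varepsilon(p)$. A global edge $e$ runs from $p$ to $q$ and is incoming at exactly one of them, namely $q$. So I must verify that the global $s(e)$ equals the sign that the incoming endpoint assigns, and that the index used there, $i\pm\tfrac12$ relative to $\ind(q)$, is $\ind(e)$. I would check this first by orienting each edge and confirming that the branch incoming at $q$ is the one labelled $e_L$ or $e_R$ with the stated sign. If the signs fail to match, the fallback is Turaev's gleam-based argument: the edge equality follows from the region one by discrete summation by parts, since each edge borders two regions whose indices differ by $1$.
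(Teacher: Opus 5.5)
Your proposal is correct and takes essentially the same route as the paper: a finite-difference check at each double point ($D(x^2)=2x$ for the edges, $D^2(x^3)=6x$ for the regions), summed over $p$, with the first equality $St(C)=\sum_p\varepsilon(p)\ind(p)$ cited from Shumakovitch rather than reproved. Your bookkeeping concern resolves as you expect: read $s(e)$ as the sign assigned at the endpoint where $e$ is incoming, so each edge is counted once. With both incoming branches drawn from below, the lower-left incoming germ always has index $i+\tfrac12$, so only the upper choice of $\pm$ in your edge formula occurs, and this is exactly the sign matching the paper's local verification uses.
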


\paragraph{Elementary verification at one double point.}
Fix a double point $p$ and write
\[
\varepsilon=\varepsilon(p),
\qquad
i=\ind(p).
\]

\smallskip
\noindent\emph{(0-cell level).}
The contribution of $p$ is $\varepsilon\, i$.

\smallskip
\noindent\emph{(1-cell level).}
The two incident edges have indices $i\pm \tfrac12$ with signs $\pm\varepsilon$ as above, hence
\[
\frac12\Bigl[
\varepsilon\,(i+\tfrac12)^2
-
\varepsilon\,(i-\tfrac12)^2
\Bigr]
=
\varepsilon\, i.
\]

\smallskip
\noindent\emph{(2-cell level).}
Among the four adjacent regions, two have index $i+1$ and $i-1$,
and the other two have index $i$, with the alternating pattern encoded by the gleam weights.

Thus the local region-level contribution equals
\[
\frac13\cdot \frac{\varepsilon}{2}
\Bigl[
(i+1)^3+(i-1)^3-2i^3
\Bigr].
\]

Since
\[
(i+1)^3+(i-1)^3-2i^3=6i,
\]
we obtain
\[
\frac13\cdot \frac{\varepsilon}{2}\cdot 6i
=
\varepsilon\, i.
\]

This proves the equality of the three expressions in
\eqref{eq:Shumakovitch-curve-original}.

\begin{remark}
The above comparison suggests that the invariant $St$
may be viewed as a twisted version of $St_{(1)}$,
associated with a nontrivial global sign structure.
\end{remark}

\subsubsection{Untwisted plane-curve identity for $St_{(1)}$}

We next record the corresponding untwisted identity for $St_{(1)}$.
Unlike the original Arnold invariant $St$, this invariant does not use
the global sign system of double points.
In particular, the region-level formula below uses local signs,
not Turaev gleams.

Let $p$ be a double point of $C$, and write
\[
i=\ind(p).
\]

In a small neighborhood of $p$, the four adjacent regions have Alexander
numberings
\[
i+1,\quad i,\quad i,\quad i-1,
\]
after relabeling if necessary.

The index of an incident edge is defined
as the average of the Alexander numberings of the two adjacent regions,
and the index of the double point is the average of the four adjacent
region numberings, namely $i=\ind(p)$.

\paragraph{Edge level.}
Among the edge germs incident to $p$, consider the two germs directed
toward $p$.
Their indices are
\[
i+\frac12,\qquad i-\frac12.
\]

We define the local sign by
\[
s_p(\sigma_1)=+1
\quad\text{if}\quad
\ind(\sigma_1)=i+\frac12,
\]
and
\[
s_p(\sigma_1)=-1
\quad\text{if}\quad
\ind(\sigma_1)=i-\frac12.
\]

Then the local edge contribution is
\[
\bigl(i+\tfrac12\bigr)^2-\bigl(i-\tfrac12\bigr)^2
=
2i.
\]

Hence the contribution of $p$ to $St_{(1)}$
is obtained by multiplying the local edge sum by $\frac12$.

\begin{remark}
If all four incident edge germs are counted,
then incoming and outgoing germs produce the same local contribution twice.
This introduces a multiplicity $m=2$,
and the same normalized formula is recovered after dividing by this multiplicity.
\end{remark}

\paragraph{Region level.}
The index of each region is its Alexander numbering itself.

We define the local sign on the four adjacent regions as follows:
the two regions for which the two incident boundary directions are both
entering or both leaving are assigned sign $-1$,
and the other two regions are assigned sign $+1$.

With this convention, the local region contribution at $p$ is
\[
-2i^3+(i+1)^3+(i-1)^3
=
6i.
\]

Hence the contribution of $p$ to $St_{(1)}$
is obtained by multiplying the local region sum by $\frac16$.

\begin{remark}
At the region level, the four adjacent regions are counted without
additional multiplicity for the fixed double point.
Thus the effective multiplicity is $m=1$
in this local computation.
\end{remark}

Summing these local identities over all double points gives
\[
St_{(1)}(C)
=
\sum_{p\in\Delta_0(C)} \ind(p)
=
\frac12
\sum_{(p,\sigma_1)} s_p(\sigma_1)\,\ind(\sigma_1)^2
=
\frac16
\sum_{(p,\sigma_2)} s_p(\sigma_2)\,\ind(\sigma_2)^3,
\]
where the sums are taken over the corresponding local incidences
at double points.

\subsubsection{Untwisted surface identity for $St_{(2)}$}

Let $S:\Sigma \looparrowright \mathbb{R}^3$
be a generic oriented and cooriented immersion
and let $t$ be a triple point.

We define signs on the cells in a small ball neighborhood of $t$.

\paragraph{Triple points (0-cells).}
We set
\[
s(t)=+1.
\]

\paragraph{Double lines (1-cells).}
Each incident double line is oriented in the direction along which the Alexander index increases.

For a local double-line segment $e$ incident to $t$, we define
\[
s_t(e)=
\begin{cases}
-1 & \text{if $e$ is oriented toward $t$,}\\
+1 & \text{if $e$ is oriented away from $t$.}
\end{cases}
\]

\paragraph{Sheets (2-cells).}
For a sheet piece $f$ incident to $t$,
let $e_1,e_2$ be the two incident double-line segments
in $\partial f$ meeting at $t$.

We define
\[
s_t(f)=
\begin{cases}
+1 & \text{if $e_1$ and $e_2$ have the same orientation,}\\
-1 & \text{otherwise.}
\end{cases}
\]

\paragraph{Regions (3-cells).}
For a region $R$ incident to $t$,
let $f_1,f_2,f_3$ be the three sheet pieces bounding $R$
near $t$.

We define
\[
s_t(R)=s_t(f_1)s_t(f_2)s_t(f_3).
\]

This definition is induced from the orientation
of the incident double-line segments.

\begin{theorem}[Surface Shumakovitch-type identity]\label{thm:ShumakovitchF}
\begin{equation}\label{eq:Shumakovitch-surface}
\begin{split}
\StTwo(S)
= \sum_{t \in\Delta_0(S)}\ind(t)
&=
\frac{1}{6}\sum_{(t,\sigma_1)} s_t(\sigma_1)\,\ind(\sigma_1)^2 \\
&=
\frac{1}{18}\sum_{(t,\sigma_2)} s_t(\sigma_2)\,\ind(\sigma_2)^3 \\
&=
\frac{1}{24}\sum_{(t,\sigma_3)} s_t(\sigma_3)\,\ind(\sigma_3)^4,
\end{split}
\end{equation}
where the sums are taken over local incidences between triple points
and the corresponding adjacent cells.
\end{theorem}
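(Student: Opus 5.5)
The plan is to reduce everything to a single triple point, exactly as in the elementary verification of Fact~\ref{fact:Sh}. Every sum in \eqref{eq:Shumakovitch-surface} is taken over local incidences $(t,\sigma_k)$, so it splits as a sum over $t\in\Delta_0(S)$ of local contributions. The first equality $\StTwo(S)=\sum_t\ind(t)$ I take to be the definition, or the known formula, of $\StTwo$ from \cite{ItoMizuno2026sliceF}. It therefore suffices to show that, for each triple point $t$ with $x=\ind(t)$, each of the three normalized local sums equals $x$.

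\textbf{Local model.} Near $t$ the three sheets are modelled by the coordinate planes, with coorientations along the positive axes. An octant with sign pattern $(\epsilon_1,\epsilon_2,\epsilon_3)\in\{\pm\}^3$ then has Alexander number $x+\tfrac12(\epsilon_1+\epsilon_2+\epsilon_3)$, since the average of all eight values is $x$. Averaging gives the indices of the other incident cells.
\begin{itemize}
\item \emph{Double lines.} The six half-axes are each adjacent to four octants, and have indices $x\pm\tfrac12$, three of each.
\item \emph{Sheets.} The twelve quarter-planes are each adjacent to two octants, and have indices $x+1$ (three), $x$ (six) and $x-1$ (three).
\item \emph{Regions.} The octants have indices $x\pm\tfrac32$ (one each) and $x\pm\tfrac12$ (three each).
\end{itemize}

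\textbf{Signs at the 1- and 2-cell levels.} Each double line is oriented toward increasing index. So a half-axis of index $x+\tfrac12$ points away from $t$ and gets sign $+1$, while one of index $x-\tfrac12$ points toward $t$ and gets sign $-1$. This gives
\[
3\bigl[(x+\tfrac12)^2-(x-\tfrac12)^2\bigr]=6x .
\]
A quarter-plane bounded by two half-axes of the same sign has both boundary segments oriented consistently, so it gets $s_t=+1$; these are exactly the pieces of index $x\pm1$. The six mixed pieces have index $x$ and get $-1$. This gives
\[
3(x+1)^3-6x^3+3(x-1)^3=18x ,
\]
which is the computation already carried out in the proof of Theorem~\ref{thm:DS} for surfaces. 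After dividing by $6$ and $18$ respectively, both contributions equal $x$.

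\textbf{Main obstacle: the region level.} I would like to invoke the cubical identity
\[
(x+\tfrac32)^4-3(x+\tfrac12)^4+3(x-\tfrac12)^4-(x-\tfrac32)^4=24x
\]
from the proof of Theorem~\ref{thm:DS}, which requires the signs on the octants to form the alternating pattern $(-1)^{\#\{\epsilon_i=-\}}$. A direct check of the stated rule $s_t(R)=s_t(f_1)s_t(f_2)s_t(f_3)$ gives a different answer. For the octant $(+,+,-)$, the three bounding quarter-planes have signs $+,-,-$, so the product is $+1$. The same happens for every octant, so the product rule yields $+1$ everywhere and does not produce the cubical pattern.

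The step I would attempt first is therefore to reconcile the conventions. I would interpret $s_t(R)$ as the orientation sign of the vertex of the dual cube in Figure~\ref{fig:Dloop}, equivalently the parity $(-1)^{\#\{\text{double-line segments at }R\text{ oriented toward }t\}}$. Then I would check that this agrees with the sign $\sgn(\sigma_0^\vee)$ used in $d^3\phi_S$. With that identification, the region-level contribution is $24x/24=x$.

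Summing the three local identities over all triple points then yields \eqref{eq:Shumakovitch-surface}. In this form the statement is the global version of the local compatibility \eqref{eq:DSsurfaceJ}, together with its analogue one degree lower between the double-line and sheet levels.
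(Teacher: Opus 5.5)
Your proposal follows the same route as the paper, which also fixes a triple point $t$ with $x=\ind(t)$ and checks that the three normalized local sums reduce to $x$ via $6x$, $18x$ and $24x$; it is correct once the region sign is read the way you propose. Your region-level diagnosis is right, and it is exactly the point the paper's proof passes over by simply writing down the alternating sum: the quarter-plane in $\{x_i=0\}$ bounding the octant $(\epsilon_1,\epsilon_2,\epsilon_3)$ has $s_t=\epsilon_j\epsilon_k$, so the stated rule $s_t(R)=s_t(f_1)s_t(f_2)s_t(f_3)$ equals $\epsilon_1^2\epsilon_2^2\epsilon_3^2=1$ for every octant, and no rule built from the bounding sheet signs can work, because the octants of index $x\pm\tfrac32$ are each bounded by three sheets of sign $+1$ yet need opposite signs; the paper's displayed identity holds precisely with your sign $s_t(R)=s_t(e_1)s_t(e_2)s_t(e_3)=\epsilon_1\epsilon_2\epsilon_3$, taken from the three incident double-line segments.
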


\begin{proof}
Fix a triple point $t$.
The following computation verifies the contribution of the local
incidences adjacent to $t$.

\smallskip
\noindent\emph{(Edge level).}
The three incident double lines contribute symmetrically, hence
\[
\frac{1}{6}\Bigl[
3\bigl(\ind(t)+\tfrac{1}{2}\bigr)^2
-3\bigl(\ind(t)-\tfrac{1}{2}\bigr)^2
\Bigr]
=
\ind(t).
\]

\smallskip
\noindent\emph{(Face level).}
The induced signs on the twelve sheet pieces yield the alternating sum
\[
\frac{1}{18}\Bigl[
3\bigl(\ind(t)+1\bigr)^3
-6\ind(t)^3
+3\bigl(\ind(t)-1\bigr)^3
\Bigr]
=
\ind(t).
\]

\smallskip
\noindent\emph{(Region level).}
The eight local regions yield the third finite-difference pattern
\[
\frac{1}{24}\Bigl[
\bigl(\ind(t)+\tfrac{3}{2}\bigr)^4
-3\bigl(\ind(t)+\tfrac{1}{2}\bigr)^4
+3\bigl(\ind(t)-\tfrac{1}{2}\bigr)^4
-\bigl(\ind(t)-\tfrac{3}{2}\bigr)^4
\Bigr]
=
\ind(t).
\]
\end{proof}

The above elementary argument admits a reformulation
in terms of the discrete Stokes-type framework explained in
Sections~\ref{sec:ShSt}--\ref{sec:ShStTwo}.

\subsubsection{The original Shumakovitch identity via local finite differences}\label{sec:ShSt}

Let $C\subset \mathbb{R}^2$ be an oriented generic immersed plane curve.
Assume that the base point is chosen on an edge adjacent to the unbounded region.

\paragraph{Dual complex.}
Let $\Delta(C)$ be the cell decomposition of $\mathbb{R}^2$
induced by $C$, and let $\Delta^\vee(C)$ be its barycentric dual.

Thus:
\begin{itemize}
\item dual $0$-cells correspond to regions,
\item dual $1$-cells cross arc segments,
\item each double point corresponds to a small dual $2$-cell.
\end{itemize}

\paragraph{Alexander numbering as a function on dual $0$-cells.}
Let
\[
\phi \in \mathcal{K}^0(C)
\]
be the Alexander numbering on dual $0$-cells:
\[
\phi(R^\vee)=\ind(R).
\]

\paragraph{Edge level.}
Let $p$ be a double point and write
\[
i=\ind(p).
\]

Among the local edge germs incident to $p$,
we consider the two germs directed toward $p$.

Their indices are
\[
i+\frac12,\qquad i-\frac12.
\]

The local finite-difference contribution is
\[
\bigl(i+\tfrac12\bigr)^2-\bigl(i-\tfrac12\bigr)^2
=
2i.
\]

For the original Arnold invariant,
this local edge contribution is twisted by the global sign
$\varepsilon(p)$ determined by the chosen base point
and the orientation convention.

Thus the edge-level contribution becomes
\[
\frac12\,
\varepsilon(p)
\Bigl[
\bigl(i+\tfrac12\bigr)^2-\bigl(i-\tfrac12\bigr)^2
\Bigr]
=
\varepsilon(p)i.
\]

Equivalently, this gives the edge-level term
\[
\frac12 \sum_{e\in\Delta_1(C)} s(e)\,\ind(e)^2
\]
in the original Shumakovitch identity.

\paragraph{Region level with gleam.}
At the region level, the same twisting is encoded by the gleam weights.

Around the double point $p$,
the four adjacent regions have Alexander numberings
\[
i+1,\quad i,\quad i,\quad i-1
\]
after relabeling if necessary.

With the gleam convention described above,
the local region contribution is
\[
\frac13\cdot \frac{\varepsilon(p)}{2}
\Bigl[
(i+1)^3+(i-1)^3-2i^3
\Bigr].
\]

Since
\[
(i+1)^3+(i-1)^3-2i^3=6i,
\]
this contribution is equal to
\[
\varepsilon(p)i.
\]

Therefore one obtains
\[
St(C)
=
\sum_{p\in\Delta_0(C)} \varepsilon(p)\ind(p)
=
\frac12\sum_{e\in\Delta_1(C)} s(e)\ind(e)^2
=
\frac13\sum_{R\in\Delta_2(C)} g(R)\ind(R)^3.
\]

\paragraph{Effective-factor principle (curve case).}
The normalization coefficients arise from two ingredients:
the finite-difference coefficient and the effective multiplicity
of local contributions to the $0$-cell index.

For details, see Section~\ref{sec:Normalization}.

\subsubsection{The plane-curve case via local finite differences}\label{sec:ShStOne}

Let $C\subset \mathbb{R}^2$ be an oriented generic immersed curve,
and let $p$ be a double point.

\paragraph{Dual complex.}
In a small neighborhood of $p$:
\begin{itemize}
\item 2 local directions (smoothing directions),
\item 4 regions.
\end{itemize}

Let $\Delta^\vee(C)$ be the dual complex.

\paragraph{Alexander numbering.}
Let
\[
\phi \in \mathcal{K}^0(C)
\]
be the Alexander numbering on regions.

\paragraph{Edge level.}
Near a double point,
there are two local directions corresponding to
the two smoothing directions of the crossing.

One of these connects regions with the same Alexander numbering
and hence gives no contribution.

The other connects regions whose Alexander numbers differ by
\(\pm \tfrac12\), and yields
\[
(i+\tfrac12)^2-(i-\tfrac12)^2=2i.
\]

Thus the effective contribution is \(2i\),
yielding the normalization
\[
\frac{1}{2}.
\]

\paragraph{Region level.}
The four surrounding regions produce the second finite difference
\[
(i+1)^3+(i-1)^3-2i^3 = 6i,
\]
and hence the normalization coefficient is
\[
\frac{1}{6}.
\]

\medskip

Thus one obtains
\begin{align*}
\StOne (C)
&=
\sum_{p \in\Delta_0(C)} \ind(p) \\
&=
\frac{1}{2}
\sum_{(p,\sigma_1)}
s_p(\sigma_1)\ind^2(\sigma_1) \\
&=
\frac{1}{6}
\sum_{(p,\sigma_2)}
s_p(\sigma_2)\ind^3(\sigma_2),
\end{align*}
\noindent
where each sum is taken over local incidences at double points.

\subsubsection{The surface triple-point case via local finite differences}\label{sec:ShStTwo}

Let $S:\Sigma \looparrowright \mathbb{R}^3$ be a generic immersion,
and let $t$ be a triple point.

\paragraph{Dual complex.}
In a small ball neighborhood of $t$:
\begin{itemize}
\item 3 double lines,
\item 12 sheet pieces,
\item 8 regions.
\end{itemize}

Let $\Delta^\vee(S)$ be the dual complex.

\paragraph{Alexander numbering.}
Let
\[
\phi \in \mathcal{K}^0(S)
\]
be the Alexander numbering on regions.

\paragraph{Edge level.}
Near a triple point,
there are three independent local directions
corresponding to the three double lines.

Each double line connects regions whose Alexander numbers differ by
\(\pm \tfrac12\),
and hence produces the finite difference
\[
(x+\tfrac12)^2-(x-\tfrac12)^2=2x.
\]

Thus the total contribution is
\[
3\cdot 2x = 6x,
\]
yielding the normalization
\[
\frac{1}{6}.
\]

\paragraph{Face level.}
Each sheet contribution produces the second finite difference
\[
D^2(x^3)=3!\,x.
\]

There are three symmetric families of such local configurations.

Hence the total factor becomes
\[
3\cdot 3! = 18,
\]
yielding the normalization
\[
\frac{1}{18}.
\]

\paragraph{Region level.}
The eight surrounding regions produce the third finite difference
\[
D^3(x^4)=4!\,x,
\]
and hence the normalization coefficient is
\[
\frac{1}{24}.
\]

\medskip

Thus one obtains
\begin{align*}
\StTwo (S)
&=
\sum_{t \in\Delta_0(S)} \ind(t) \\
&=
\frac{1}{6}
\sum_{(t,\sigma_1)}
s_t(\sigma_1)\ind^2(\sigma_1) \\
&=
\frac{1}{18}
\sum_{(t,\sigma_2)}
s_t(\sigma_2)\ind^3(\sigma_2) \\
&=
\frac{1}{24}
\sum_{(t,\sigma_3)}
s_t(\sigma_3)\ind^4(\sigma_3),
\end{align*}
\noindent
where each sum is taken over local incidences at triple points.

\subsection{Normalization mechanism}\label{sec:Normalization}

The normalization of all identities in this paper
is governed by the finite-difference identity
\[
D^r(x^{r+1}) = (r+1)! \, x.
\]

More precisely,
two independent factors determine the coefficients:

\begin{itemize}
\item the finite-difference coefficient $(r+1)!$ arising from $D^r$,
\item the multiplicity $m$ of local contributions to the $0$-cell index.
\end{itemize}

Thus the normalization coefficient is
\[
\frac{1}{m(r+1)!}.
\]

\begin{example}
For surfaces,
one has $m=3$ at the levels of edges and faces,
and $m=1$ at the level of regions,
yielding
\[
\frac{1}{3 \cdot (1+1)!},\quad
\frac{1}{3 \cdot (2+1)!},\quad
\frac{1}{1\cdot(3+1)!}.
\]
\end{example}

\begin{example}
For curves,
one has $m=1$ at the levels of edges and regions,
yielding
\[
\frac{1}{1 \cdot (1+1)!},
\quad
\frac{1}{1 \cdot (2+1)!}.
\]
\end{example}

\begin{remark}
The multiplicity $m$ should be understood as the effective redundancy
with which local configurations contribute to the $0$-cell index.

In the curve case,
part of this multiplicity is absorbed into the gleam weights,
and therefore does not appear as a separate uniform parameter.

A systematic formulation of this normalization structure will be given in a subsequent work.
\end{remark}

Through the Shumakovitch-type identities above,
we obtain normalized highest-degree local Stokes compatibilities
for the locally defined expressions \(d^k\phi\).

These compatibilities organize the local finite-difference computations
on dual complexes
and explain why higher powers of the Alexander index collapse
to linear contributions.

In particular, invariants constructed from expressions of the form
\(d^{r+1}\phi\)
reduce, after summation,
to linear functions of \(\ind\).

The curve case admits a complete local closure around dual \(2\)-cells,
while the surface case exhibits a symmetric highest-degree cubical closure.
This distinction explains why the untwisted invariants
\(St_{(1)}\) and \(St_{(2)}\)
fit naturally into the local finite-difference framework,
whereas the original Arnold invariant \(St\)
requires an additional global twisting structure.

\section{Slice realization of the de Rham framework}
\label{sec:slice-drum}

In this section,
we reinterpret the slice construction
within the discrete de Rham framework developed in
Sections~\ref{sec:dual} and \ref{sec:ShId}.

\paragraph{Notation.}
In this section,
the expressions
\[
dSt_{(1)},\qquad dSt_{(2)}
\]
denote the jumps of the corresponding invariants
across codimension-one transitions,
in accordance with the classical slice formula.

They should be distinguished from the local operators
\[
d^k
\]
appearing in the discrete de Rham framework.

\subsection{Slicing and compatibility}

Let $S:\Sigma \looparrowright \mathbb{R}^3$
be a generic immersion
in Morse position with respect to a height function $h$,
and let
\[
A_u = S \cap \{h = u\}
\]
be the slice curve.

The Alexander numbering restricts to each slice
(after a suitable global constant shift if necessary),
yielding
\[
\phi_{A_u} \in \mathcal{K}^0(A_u).
\]

Accordingly,
up to a suitable constant shift,
the local discrete expressions satisfy the compatibility
\[
d^2 \phi_{A_u}
=
\text{slice of } d^3 \phi_S.
\]

However,
this compatibility fails at codimension-one transitions,
where a discrepancy appears between restriction and variation.

\begin{remark}
Geometrically,
slicing converts triple-point contributions on the surface
into double-point contributions on the curve,
with a controlled defect recording the direction of passage
through the singular level.
\end{remark}

\subsection{Slice formula}

\begin{theorem}[slice formula in de Rham form {\cite{ItoMizuno2026sliceF}}]
Let $S$ undergo a codimension-one transition.
Then
\[
dSt_{(2)} = dSt_{(1)} + \sgn(\underline{\mathbb{R}^2}).
\]
\end{theorem}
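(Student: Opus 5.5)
The formula is a statement about jumps of two sums of indices. By Theorem~\ref{thm:ShumakovitchF} and the identity for $\StOne$, we have $\StTwo(S)=\sum_{t}\ind(t)$ and $\StOne(A_u)=\sum_p\ind(p)$. So I would compare how these two sums of $0$-cell indices change under a codimension-one event, read through the slice.

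\textbf{Setup.} I would put $S$ in Morse position with respect to $h$ and use the motion-picture description of \cite{ItoMizuno2026sliceF}. Each codimension-one transition of $S$ (types E, H, T, Q) is then realized as a family of slices $A_u$.

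\textbf{Step 1: the slice term.} Triple points of $S$ at generic levels correspond to double points passing through the slices. This is the compatibility $d^2\phi_{A_u}=$ slice of $d^3\phi_S$. Combining it with Theorem~\ref{thm:DS} and the normalizations $\tfrac1{24}$ and $\tfrac16$, a triple point $t$ with $\ind(t)=x$ seen at a level near its height contributes a double point with index $x$ up to the global constant shift. So away from the singular level, the change of $\sum_t\ind(t)$ across the event matches the change of $\sum_p\ind(p)$ in the slices. This produces the term $dSt_{(1)}$.

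\textbf{Step 2: case analysis of the defect.} Here I would treat each event separately.
\begin{itemize}
\item For E and H moves, the event is a Reidemeister-type move in a slice: a birth or death of a pair of double points, or a self-tangency. The two new cells have opposite local finite-difference patterns, so I expect the defect to vanish except for boundary terms.
\item For T and Q moves, one triple point, respectively four triple points, is created or cancelled. The finite-difference computations $D^3(x^4)=24x$ and $D^2(x^3)=6x$ should match the $0$-cell index change on both sides, up to one unit.
\end{itemize}

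\textbf{Step 3: identify the leftover unit.} The unit left over in Step 2 is the failure of restriction to commute with variation at the critical level. It should equal $\pm1$ according to whether the transition is crossed upward or downward relative to the cooriented plane $\underline{\mathbb{R}^2}$, which gives $\sgn(\underline{\mathbb{R}^2})$.

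\textbf{Main obstacle.} The hard part is Step 3, together with the constant shift. The Alexander numbering on $S$ is normalized to $-\tfrac32$ on the unbounded region, while the numbering on a slice is normalized to $0$. The discrepancy of $\tfrac32$, multiplied by the change in the number of triple points, is the likely source of the extra sign term. I would first compute it explicitly for the T move, which creates a single triple point, and then extend to the other moves by the symmetry of the cubical configuration.
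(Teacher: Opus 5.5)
Your outline (read $\StTwo$ and $\StOne$ as sums of $0$-cell indices, match them through the slice away from the critical level, and attribute the remainder to the coorientation of the slice passage) is the same as the paper's. In the paper, however, this is only an interpretive sketch; the formula itself is imported from \cite{ItoMizuno2026sliceF}. The details you add to turn the sketch into a proof do not hold up.

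\textbf{The counts in Step~2 are wrong.} A $T$ move creates or cancels a \emph{pair} of triple points, since the double line enters and leaves the sheet. Both have the same index $x$, so $dSt_{(2)}=\pm2x$. A $Q$ move creates or cancels none. The four triple points sit at the vertices of a small tetrahedron that the event inverts, so each one crosses the opposite sheet and its index jumps by $\pm1$.

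\textbf{So your proposed source of the defect fails.} The shift $\tfrac32$ times the change in the number of triple points gives $0$ at $Q$ and $\pm3$ at $T$. With your own counts it gives $\pm\tfrac32$ and $\pm6$. It never gives $\pm1$. More basically, a constant shift of the numbering cancels in any jump in which the points carrying it persist, so it cannot be what produces $\sgn(\underline{\mathbb{R}^2})$.

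\textbf{Step~1 is not right as stated.} Under height slicing, the double points of $A_u$ are slices of double lines. A triple point $t$ appears as a triple-point perestroika in which three double points of indices $\ind(t)\pm\tfrac12$ (up to the shift) move past one another. It does not appear as one double point of index $\ind(t)$. Also, Theorem~\ref{thm:DS} relates adjacent dual skeleta of a single object; it gives no surface-to-slice matching.

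\textbf{Your reading of $E$ and $H$ cannot work.} If these events are read, as in Step~2, as a self-tangency of the slice through the event point, then $dSt_{(2)}=0$, because no triple point changes. Meanwhile the slice gains or loses two double points of a common index $y$, so $dSt_{(1)}=\pm2y$ with $y$ arbitrary. No unit defect reconciles these, so ``the defect vanishes'' is false and the transitions must be read differently.

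\textbf{Where a $\pm1$ actually comes from.} It arises geometrically, when the cooriented slice plane itself takes part in the transition. For instance, when $\underline{\mathbb{R}^2}=\{h=u\}$ passes a triple point $t$ of $S$, the configuration $S\cup\underline{\mathbb{R}^2}$ undergoes a $Q$ event.
\begin{itemize}
\item Three of its four local triple points lie on $\underline{\mathbb{R}^2}$ and are exactly the double points of $A_u$ near $t$. They persist through the event, so the constant shift cancels, and their index jumps add up to $dSt_{(1)}$ for the triple-point perestroika of $A_u$.
\item The fourth is $t$ itself, which crosses the plane. Its index changes by $+1$ or $-1$ according to whether it crosses with or against the coorientation of $\underline{\mathbb{R}^2}$, i.e.\ the sign of the slice passage.
\end{itemize}
Bookkeeping of this kind, recording which triple points lie on the slice and which one crosses it, is what your Steps~2--3 are missing. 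At present they simply assume that a single unit is left over rather than locating it.
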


The invariant $St_{(2)}$
is given by the evaluation of $d^3\phi_S$
on dual $3$-cells,
while $St_{(1)}$
is given by the evaluation of $d^2\phi_{A_u}$
on dual $2$-cells of the slice.

The compatibility above identifies
$d^2\phi_{A_u}$
with the restriction of $d^3\phi_S$
up to a suitable constant shift.

At singular transitions,
the compatibility between slicing
and discrete local operations fails,
and a defect term appears.

This defect is determined by the coorientation
of the slice passage,
yielding the formula above.

\section{Outlook: higher-degree structures}\label{sec:outlook}
The present framework is not based on a full discrete de Rham complex. Instead, the essential structure is given by highest-degree local Stokes compatibilities on dual cells. For curves, the compatibility between \(d^2\phi\) and \(d^1\phi\) admits a complete local closure around dual \(2\)-cells. For surfaces, the highest-degree compatibility between \(d^3\phi\) and \(d^2\phi\) admits a symmetric cubical realization, while lower-degree compatibilities do not form a full cubical closure. This distinction clarifies why the untwisted invariants \(St_{(1)}\) and \(St_{(2)}\) fit naturally into the local finite-difference framework, whereas the original Arnold invariant \(St\) requires an additional global twisting structure.

The constructions developed in this paper exhibit a uniform structural pattern: invariants arise as evaluations of locally defined expressions of the form
\[
\langle d^{k+1}\phi,\ \Delta_{k+1}^\vee \rangle,
\]
governed by the finite-difference identity
\[
D^r(x^{r+1}) = (r+1)! \, x,
\]
together with highest-degree local Stokes compatibilities on dual skeleta. This finite-difference structure explains the collapse of higher-degree index expressions to linear contributions and underlies both the curve and surface cases treated here. From a conceptual viewpoint, the repeated finite-difference reductions appearing in this framework may be regarded as a discrete analogue of iterated integration. Indeed, the identities
\[
D(x^2)=2!x,\qquad
D^2(x^3)=3!x,\qquad
D^3(x^4)=4!x
\]
play a role analogous to repeated applications of the fundamental theorem of calculus, reducing higher-degree local data to lower-degree contributions. In this sense, the present framework suggests an iterated-integral-type structure on dual skeleta.

These observations strongly suggest the existence of higher-order local operations \(d^k\) extending the present framework to arbitrary degrees. Such operations are expected to be governed by the same finite-difference structures and incidence relations of dual complexes. A systematic formulation and the well-definedness of such higher-degree operators, including a precise treatment of multiplicities and possible obstruction phenomena in lower-degree compatibilities, will be developed in subsequent work.

Another important aspect of the present framework is the distinction between finite order and polynomial degree. The \emph{finite order} (Nowik) measures sensitivity to codimension-one events, whereas the \emph{polynomial (moment) degree} records dependence on powers of the Alexander index. 
For surface immersions,
Nowik's theory implies that finite-type behavior
with respect to codimension-one transitions
remains of order at most one,
even when the polynomial degree in the Alexander indices is high. 
However, this does not mean that higher polynomial degree carries no information, since the resulting invariants still vary according to the polynomial degree. For this reason, a further development of the latter theory will also be pursued in subsequent work.   

\subsection*{Acknowledgements}
This work was supported by JSPS KAKENHI Grant Numbers JP22K03603 and JP25K06999.  

\bibliographystyle{plain}
\bibliography{QRef}

\begin{bibdiv}
\begin{biblist}

\bib{Goryunov1997}{incollection}{
      author={Goryunov, V.~V.},
       title={Local invariants of mappings of surfaces into three-space},
        date={1997},
   booktitle={The {A}rnold-{G}elfand mathematical seminars},
   publisher={Birkh\"{a}user Boston, Boston, MA},
       pages={223\ndash 255},
         url={https://doi.org/10.1007/978-1-4612-4122-5_11},
      review={\MR{1429894}},
}

\bib{ItoMizuno2026sliceF}{misc}{
      author={Ito, Noboru},
      author={Mizuno, Hiroki},
       title={An explicit slice formula for surface invariants via curve
  invariants},
        date={2026},
        note={Preprint},
}

\bib{Nowik2004}{article}{
      author={Nowik, Tahl},
       title={Order one invariants of immersions of surfaces into 3-space},
        date={2004},
        ISSN={0025-5831,1432-1807},
     journal={Math. Ann.},
      volume={328},
      number={1-2},
       pages={261\ndash 283},
         url={https://doi.org/10.1007/s00208-003-0482-1},
      review={\MR{2030377}},
}

\bib{Shumakovich1995}{article}{
      author={Shumakovich, A.},
       title={Explicit formulas for the strangeness of plane curves},
        date={1995},
        ISSN={0234-0852},
     journal={Algebra i Analiz},
      volume={7},
      number={3},
       pages={165\ndash 199},
      review={\MR{1353494}},
}

\bib{Turaev1994Book}{book}{
      author={Turaev, V.~G.},
       title={Quantum invariants of knots and 3-manifolds},
      series={De Gruyter Studies in Mathematics},
   publisher={Walter de Gruyter \& Co., Berlin},
        date={1994},
      volume={18},
        ISBN={3-11-013704-6},
      review={\MR{1292673}},
}

\end{biblist}
\end{bibdiv}

\end{document}